\newtheorem{thm}{Theorem}
\newtheorem{lem}[thm]{Lemma}
\newtheorem{cor}[thm]{Corollary}
\theoremstyle{remark}
\newtheorem{rem}[thm]{Remark}
\begin{document}

\title{On the characterization of Galois extensions}

\author{Meinolf Geck}
\date{}


\begin{abstract} We present a shortcut to the familiar characterizations
of finite Galois extensions, based on an idea from an earlier 
Monthly note by Sonn and Zassenhaus. \end{abstract}

\maketitle

Let $L\supseteq K$ be a field extension and $G:=\mbox{Aut}(L,K)$. We
assume throughout that $[L:K]<\infty$. One easily sees that then 
automatically $|G|<\infty$. The following result is an essential part 
of the usual development (and teaching) of Galois theory.

\begin{thm} \label{thm1} The following conditions are equivalent.
\begin{itemize}
\item[(a)] $|G|=[L:K]$. 
\item[(b)] $L$ is a splitting field for a polynomial $0\neq f\in K[X]$ 
which does not have multiple roots in~$L$.
\item[(c)] $K=\{y\in L\mid \sigma(y)=y\mbox{ for all $\sigma\in G$}\}$.
\end{itemize}
\end{thm}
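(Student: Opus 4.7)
I would prove the equivalences in the cycle $(a)\Rightarrow(c)\Rightarrow(b)\Rightarrow(a)$, starting from the standard inequality
\[
|\mbox{Aut}(L,E)|\le [L:E] \qquad\mbox{for every intermediate field } K\subseteq E\subseteq L,
\]
which I would either quote or establish via Dedekind's theorem on the linear independence of characters (equivalently, by bounding the number of $E$-embeddings of $L$ into an algebraic closure).

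For $(a)\Rightarrow(c)$, let $F$ denote the fixed field appearing on the right-hand side of (c), so $K\subseteq F\subseteq L$. Every element of $G$ fixes $F$ by definition, so $G=\mbox{Aut}(L,F)$, and the basic inequality gives $|G|\le [L:F]\le [L:K]=|G|$. Hence $[F:K]=1$, i.e., $F=K$.

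For $(c)\Rightarrow(b)$, I would use orbit polynomials: for each $\alpha\in L$ the polynomial
\[
f_\alpha(X) := \prod_{\beta\in G\cdot\alpha}(X-\beta) \in L[X]
\]
has $G$-invariant coefficients, hence coefficients in $K$ by (c), and has no repeated roots by construction. Writing $L=K(\alpha_1,\ldots,\alpha_n)$ and letting $f:=\prod_j(X-\beta_j)$ range over the union of the orbits $G\cdot\alpha_i$, one obtains a polynomial $f\in K[X]$ that is separable, splits in $L$, and has $L$ as its splitting field.

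The main obstacle is $(b)\Rightarrow(a)$, where one must exhibit $[L:K]$ distinct $K$-automorphisms. My plan would be induction on $[L:K]$: since $L\ne K$, pick an irreducible factor $g$ of $f$ of degree $d>1$ and a root $\alpha\in L$ of $g$. Because $g\mid f$ is separable with all of its roots in $L$, there are exactly $d$ distinct $K$-homomorphisms $K(\alpha)\hookrightarrow L$; each of them extends, via the isomorphism-extension lemma together with the inductive hypothesis applied to $L$ as a splitting field of $f$ over $K(\alpha)$, to exactly $[L:K(\alpha)]$ automorphisms of $L$. Multiplying yields $d\cdot [L:K(\alpha)]=[L:K]$ distinct elements of $G$, and the basic upper bound then forces equality. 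The delicate step, and presumably where the Sonn--Zassenhaus shortcut advertised in the abstract does its work, is precisely the extension step: one must guarantee that the embeddings of $K(\alpha)$ really do lift to automorphisms of $L$ and remain pairwise distinct when assembled from the different choices of root.
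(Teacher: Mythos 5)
Your proof is correct, and your arguments for two of the three implications coincide with the paper's: the fixed-field argument for (a)$\Rightarrow$(c) and the orbit-polynomial construction for (c)$\Rightarrow$(b) are essentially verbatim what the paper does, and your inductive extension count for (b)$\Rightarrow$(a) is exactly the bookkeeping the paper delegates to Artin's \emph{Algebra} (with the minor caveat that the inductive hypothesis is most cleanly invoked over the image $K(\beta)$ of each embedding, using $[L:K(\beta)]=[L:K(\alpha)]$). Where you genuinely diverge is at the foundational inequality $|\mbox{Aut}(L,E)|\leq [L:E]$: you propose to quote it or derive it from Dedekind's linear independence of characters (or from embedding counts into an algebraic closure), which is precisely the machinery this paper is designed to avoid. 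The paper instead proves an elementary covering lemma --- $L$ is not the union of finitely many proper intermediate fields (a vector-space argument when $K$ is infinite, a count of subfields when $K$ is finite) --- to produce an element $z\in L$ whose $G$-stabilizer is trivial; the orbit of $z$ then consists of $|G|$ distinct roots of the minimal polynomial $\mu_z$, giving $|G|\leq \deg(\mu_z)\leq [L:K]$ in one stroke, and in the equality case forcing $L=K(z)$ with $\mu_z=\prod_{\sigma\in G}(X-\sigma(z))$ separable and split --- that is, (a)$\Rightarrow$(b) together with the theorem on primitive elements as a free by-product. Consequently you misplace the advertised Sonn--Zassenhaus shortcut: it does not reside in the extension step of (b)$\Rightarrow$(a), which the paper handles by the same standard induction you sketch, but in this trivial-stabilizer argument replacing Dedekind's lemma and Artin's theorem. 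Your route buys familiarity and self-containment given standard tools; the paper's route buys independence from character theory, works in any characteristic with only the degree formula and uniqueness of splitting fields, and delivers a primitive element for Galois extensions at no extra cost.
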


If these conditions hold, then $L\supseteq K$ is called a {\em Galois
extension}. For example, condition (b) yields a simple criterion for an 
extension to be a Galois extension, and condition (c) is crucial for many 
applications of Galois theory. Combining all three immediately shows that, 
if $L\supseteq K$ is a Galois extension and $M$ is an intermediate field, 
then $L\supseteq M$ also is a Galois extension and so
\[M=\{y\in L\mid \sigma(y)=y\mbox{ for all $\sigma\in H$}\} \qquad
\mbox{where}\quad H:=\mbox{Aut}(L,M)\subseteq G,\]
which is a significant part of the ``Main Theorem of Galois Theory''.

Proofs of Theorem~\ref{thm1} often rely on Dedekind's Lemma on group 
characters and Artin's Theorem (see \cite[Chap.~II, \S F]{art1}), and some
results on normal and separable extensions. Some textbooks (e.g. \cite{art2},
\cite{St}) use the ``Theorem on primitive elements'' at an early stage 
to obtain a shortcut. It is the purpose of this note to point out that
there is a different shortcut which avoids using the existence 
of primitive elements and actually establishes this existence as a 
by-product (at least for Galois extensions). This only relies on a few 
basic results about fields (e.g., the degree formula and the uniqueness 
of splitting fields); no assumptions on the characteristic are required. 
The starting point is the following observation.

\begin{lem} \label{lem1} If $K\subsetneqq L$, then $L$ is not the union 
of finitely many fields $M$ such that $K\subseteq M\subsetneqq L$.
\end{lem}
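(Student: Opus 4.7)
The natural approach is to split into two cases depending on whether $K$ is infinite or finite, since the two cases call for quite different tools.

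For the case $|K|=\infty$ I would run the classical vector-space covering trick. Assume for contradiction that $L=M_1\cup\cdots\cup M_n$ with each $M_i$ a proper intermediate field, and take $n$ minimal. Minimality supplies an element $u\in M_n$ not in any of the other $M_i$, and since $M_n\subsetneqq L$ one can also pick $v\in L\setminus M_n$. The affine pencil $\{v+tu:t\in K\}$ then has two features that together produce a contradiction: none of its points can lie in $M_n$ (for otherwise $v$ would), and the family is infinite because $K$ is. So by pigeonhole some $M_i$ with $i<n$ contains two distinct points $v+t_1u$ and $v+t_2u$; subtracting and dividing by $t_1-t_2\in K\subseteq M_i$ yields $u\in M_i$, contradicting the choice of $u$.

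The case $|K|<\infty$ forces $L$ itself to be a finite field, so $L^{\times}$ is cyclic. A generator $\alpha$ of $L^{\times}$ then satisfies $L=K(\alpha)$, and such an $\alpha$ lies in no proper intermediate field $M$ (otherwise $L=K(\alpha)\subseteq M$). Hence $\alpha$ is missed by every admissible $M$, and a fortiori by any finite union of them.

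The only step requiring genuine care is the passage to a minimal covering in the infinite case: it is precisely this that produces the witness $u\in M_n\setminus\bigcup_{i<n}M_i$ needed to drive the pigeonhole argument. Everything else is routine, and in keeping with the paper's stated aim no hypothesis on the characteristic of $K$ is used.
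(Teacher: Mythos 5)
Your proof is correct, and it splits the same way the paper does but diverges in the finite case. For infinite $K$, the paper simply cites the well-known fact that a finite-dimensional vector space over an infinite field is not a finite union of proper subspaces; your minimal-cover/affine-pencil argument is precisely the standard proof of that fact (note your subtraction step uses only that each $M_i$ is a $K$-subspace, so you have in effect proved the general linear-algebra statement, not merely the field version), so this half is the same approach with the details written out. Two trivial points deserve a word in a polished write-up: $n=1$ gives an immediate contradiction since $M_1\subsetneqq L$, so $n\geq 2$ and the pigeonhole over $i<n$ is legitimate; and $u\neq 0$ because $0$ lies in every $M_i$, which is what makes $t\mapsto v+tu$ injective and the pencil infinite. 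For finite $K$, you take exactly the route the paper mentions and then sets aside: cyclicity of $L^{\times}$ yields a generator $\alpha$ with $L=K(\alpha)$, and such an $\alpha$ lies in no proper intermediate field, so any finite (indeed arbitrary) union of them misses it. The paper instead counts: for each $m$ there is at most one subfield of $L$ of order $p^m$ (its elements are roots of $X^{p^m}-X$), so the proper subfields cover at most $1+p+\cdots+p^{n-1}<p^n$ elements. Your route is shorter and even shows the union of \emph{all} proper intermediate fields is proper, but it imports cyclicity of finite multiplicative groups, a theorem needing its own proof; the paper's counting argument is fully self-contained, which matters given the note's stated aim of relying only on a few basic facts about fields. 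Both versions are, as required, free of any hypothesis on the characteristic.
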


\begin{proof} If $K$ is infinite, then each $M$ as above is a proper 
subspace of the $K$-vector space $L$, and it is well-known and easy to prove
that a finite-dimensional vector space over an infinite field is not the union 
of finitely many proper subspaces. 
Now assume that $K$ is finite. Then $L$ is also finite and so $|L|=p^n$ for 
some prime $p$. In this case, it is not enough just to argue with the
vector space structure. One could use the fact that the multiplicative 
group of $L$ is cyclic. Or one can argue as follows. Again, it is well-known
and easy to prove that, for every $m\leq n$, there is at most one subfield
$M\subseteq L$ such that $|M|=p^m$. (The elements of such a subfield are
roots of the polynomial $X^{p^m}-X\in L[X]$.)
So the total number of elements in $L$ which lie in proper subfields is 
at most $1+p+\cdots + p^{n-1}< p^n=|L|$, as desired.
\end{proof}

\begin{cor} \label{cor1} There exists an element $z\in L$ such that 
$\operatorname{Stab}_G(z)=\{\operatorname{id}\}$. 
\end{cor}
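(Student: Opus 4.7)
The plan is to apply Lemma~\ref{lem1} to the family of fixed subfields of the non-identity elements of $G$. For each $\sigma\in G$, set
\[ M_\sigma:=\{y\in L\mid \sigma(y)=y\}. \]
I would first verify the elementary facts that each $M_\sigma$ is a subfield of $L$ containing $K$ (since elements of $K$ are fixed by every $K$-automorphism), and that $M_\sigma\subsetneqq L$ whenever $\sigma\neq\operatorname{id}$ (because a nonidentity automorphism moves at least one element of $L$).

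Next, I would dispose of the trivial case $K=L$ separately: then $G=\{\operatorname{id}\}$ and any $z\in L$ works. Otherwise $K\subsetneqq L$, and since $G$ is finite there are only finitely many subfields $M_\sigma$ with $\sigma\neq\operatorname{id}$. Lemma~\ref{lem1} then directly implies that
\[ L\;\neq\;\bigcup_{\sigma\in G\setminus\{\operatorname{id}\}} M_\sigma, \]
so there exists $z\in L$ lying outside every $M_\sigma$ for $\sigma\neq\operatorname{id}$. For such $z$ we have $\sigma(z)\neq z$ for all $\sigma\neq\operatorname{id}$, which is exactly the statement $\operatorname{Stab}_G(z)=\{\operatorname{id}\}$.

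There is really no hard step here; the entire content is in Lemma~\ref{lem1}. The only thing to notice is that the correct family of ``proper intermediate fields'' to feed into the lemma is precisely the collection of fixed fields of the nontrivial automorphisms, and that this family is finite because $|G|<\infty$. This is exactly the point where the hypothesis that $G$ is finite gets used.
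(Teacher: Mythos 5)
Your proof is correct and matches the paper's own argument essentially verbatim: both define the fixed fields $M_\sigma$ for $\sigma\neq\operatorname{id}$, note they are proper intermediate fields, and apply Lemma~\ref{lem1} to find $z$ outside their union. The only cosmetic difference is that you split on $K=L$ versus $K\subsetneqq L$ while the paper splits on whether $G$ is trivial, which amounts to the same thing.
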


\begin{proof} If $G=\{\mbox{id}\}$, there is nothing to prove. Now assume
that $G\neq\{\mbox{id}\}$. For $\mbox{id}\neq\sigma\in G$ we set 
$M_\sigma:=\{y\in L\mid \sigma(y)=y\}$. Then $M_\sigma$ is a field such that 
$K\subseteq M_\sigma \subsetneqq L$. Now apply Lemma~\ref{lem1}.
\end{proof}

\begin{cor} \label{cor2} We always have $|G|\leq [L:K]$. If equality holds, 
then there is some $z\in L$ such that $L=K(z)$ and the minimal polynomial 
$\mu_z\in K[X]$ has only simple roots in $L$; furthermore, $L$ is a 
splitting field for $\mu_z$.
\end{cor}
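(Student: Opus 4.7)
The plan is to apply Corollary~\ref{cor1} to obtain an element $z\in L$ whose stabilizer in $G$ is trivial, and to read everything off from its minimal polynomial $\mu_z\in K[X]$.

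Since $\operatorname{Stab}_G(z)=\{\operatorname{id}\}$, the orbit $\{\sigma(z):\sigma\in G\}$ contains exactly $|G|$ pairwise distinct elements of $L$. Each $\sigma\in G$ fixes $K$ pointwise, and so fixes the coefficients of $\mu_z$; applying $\sigma$ to the relation $\mu_z(z)=0$ shows that every orbit element is itself a root of $\mu_z$ in $L$. This immediately gives
\[|G|\le\deg\mu_z=[K(z):K]\le[L:K],\]
where the last inequality is the degree formula applied to $K\subseteq K(z)\subseteq L$. That proves the first assertion.

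In the equality case $|G|=[L:K]$, the displayed chain collapses to a chain of equalities. From $[K(z):K]=[L:K]$ and the degree formula I conclude $L=K(z)$. From $\deg\mu_z=|G|$, together with the $|G|$ distinct roots already exhibited, I see that $\mu_z$ factors over $L$ as $\prod_{\sigma\in G}(X-\sigma(z))$ and therefore has only simple roots in $L$. Finally, $L=K(z)$ is generated over $K$ by one of these roots and contains all of them, so it is a splitting field of $\mu_z$ over $K$.

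The only genuinely substantive step is matching up the $|G|$ orbit elements with roots of $\mu_z$; once that counting is in place, everything else is the degree formula and bookkeeping. In particular, no separability, normality, or primitive element results are invoked, which is precisely the shortcut advertised in the introduction.
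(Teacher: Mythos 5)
Your proposal is correct and follows essentially the same route as the paper's proof: you take $z$ from Corollary~\ref{cor1}, note that its $|G|$ distinct orbit elements are roots of $\mu_z$, obtain the chain $|G|\le\deg\mu_z=[K(z):K]\le[L:K]$, and in the equality case collapse it to get $L=K(z)$, the factorization $\mu_z=\prod_{\sigma\in G}(X-\sigma(z))$ with simple roots, and the splitting-field claim. No gaps; this matches the paper's argument step for step.
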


\begin{proof} Let $z\in L$ be as in Corollary~\ref{cor1}. Let $G=\{\sigma_1,
\ldots,\sigma_m\}$. Then $\{\sigma_i(z)\mid 1\leq i\leq m\}$ has precisely
$m$ elements. Now $[L:K]\geq [K(z):K]=\deg(\mu_z)$. Since $\mu_z$ has 
coefficients in $K$, we have $\mu_z(\sigma_i(z))=\sigma_i(\mu_z(z))=0$ for 
all $i$. So $\mu_z$ has at least $m$ distinct roots; in particular,
$\deg(\mu_z)\geq m=|G|$. This shows that $[L:K]\geq |G|$. If $[L:K]=|G|$,
then all of the above inequalities must be equalities. This yields 
$L=K(z)$ and $\deg(\mu_z)=m$; in particular, $\mu_z=\prod_{i=1}^m
(X-\sigma_i(z))$ has only simple roots and $L$ is a splitting field 
for $\mu_z$.
\end{proof}

Corollary~\ref{cor2} shows the implication ``(a) $\Rightarrow$ (b)'' in 
Theorem~\ref{thm1} and also establishes the existence of a primitive element. 
Then the remaining implications in Theorem~\ref{thm1} are proved by 
standard arguments, which we briefly sketch:

\medskip
\noindent {\bf Proof of ``(a) $\Rightarrow$ (c)'':}  
Let $M:=\{y\in L\mid \sigma(y)=y\mbox{ for all $\sigma\in G$}\}$. Then
$M$ is a field such that $K\subseteq M\subseteq L$ and it is clear from
the definitions that $G=\mbox{Aut}(L,M)$. Hence, Corollary~\ref{cor2} shows
that $|G|\leq [L:M]\leq [L:K]$. Since (a) holds, this implies that
$[L:M]=[L:K]$ and so $M=K$. 

\medskip
\noindent {\bf Proof of ``(c) $\Rightarrow$ (b)'':}  Let $L=K(z_1,\ldots,
z_m)$ and form the set $B:=\{\sigma(z_i)\mid 1\leq i\leq m,\sigma\in G\}$. 
Since (c) holds, we have $f:=\prod_{z\in B} (X-z) \in K[X]$; furthermore, 
$L$ is a splitting field for $f$, and $f$ has no multiple roots. 

\medskip
\noindent {\bf Proof of ``(b) $\Rightarrow$ (a)'':} This relies on
a standard result on extending field isomorphisms (which is also used
to prove that any two splitting fields of a polynomial are isomorphic; see, 
e.g., \cite[Theorem~10]{art1}). Using this result and induction on $[L:K]$, 
it is a simple matter of book-keeping (no further theory required) to 
construct $[L:K]$ distinct elements of $G$; the details can be found, for 
example, in \cite[Chap.~14, (5.4)]{art2}. This shows that $|G|\geq [L:K]$, 
and Corollary~\ref{cor2} yields equality. 

\medskip
Once Theorem~\ref{thm1} and Corollary~\ref{cor2} are established, the
``Main Theorem of Galois Theory'' now follows rather quickly; see
\cite[Chap.~14, \S 5]{art2} or \cite[\S 9.3]{St}. 

\begin{rem} The idea of looking at elements of $L$ which do not lie in 
proper subfields is taken from~\cite{zass}.
\end{rem}


\bigskip
\noindent {IAZ -- Lehrstuhl f\"ur Algebra, Universit\"at Stuttgart,
Pfaffenwaldring 57, 70569 Stuttgart, Germany\\
meinolf.geck@mathematik.uni-stuttgart.de}


\begin{thebibliography}{131}
\bibitem{art1}
E. Artin, {\it Galois Theory}. Edited and with a supplemental chapter by 
A. N. Milgram. Reprint of the 1944 second edition. Dover 
Publications, Inc., Mineola, NY, 1998. 

\bibitem{art2}
M. Artin, {\it Algebra}. Prentice Hall, Englewood Cliffs, NJ, 1991.

\bibitem{zass}
J. Sonn, H. Zassenhaus, On the theorem on the primitive element,
{\it Amer. Math. Monthly} {\bf 74} (1967) 407--410.

\bibitem{St}
J. Stillwell, {\it Elements of Algebra. Geometry, Numbers, Equations},
Undergraduate Texts in Mathematics, Springer--Verlag, New York, 1994.
\end{thebibliography}
\end{document}